\newtheorem{thm}{Theorem}[section]
\newtheorem{pro}[thm]{Proposition}
\newtheorem{lem}[thm]{Lemma}
\newtheorem{cor}[thm]{Corollary}
\newtheorem{rem}[thm]{Remark}
\def\l{\langle}
\def\r{\rangle}
\date{}
\title{\normalsize{\bf ON LOCALLY GRADED GROUPS\\ WITH A WORD WHOSE VALUES ARE ENGEL}\\
[10pt]
\small{Dedicated to Professor Howard Smith on the occasion of his retirement}}
\author{\small{\textsc{Pavel Shumyatsky\begin{footnote}{This work was carried out during the first author's visit to the University of Salerno, which was partially supported by GNSAGA.}\end{footnote}}}\\
\small{Department of Mathematics, University of Brasilia}\\
\small{Brasilia-DF, 70910-900 Brazil}\\
\small{E-mail: pavel@unb.br}\\
[10pt]
\small{\textsc{Antonio Tortora} and \textsc{Maria Tota}}\\
\small{Dipartimento di Matematica, Universit\`a di Salerno}\\
\small{Via Giovanni Paolo II, 132 - 84084 - Fisciano (SA), Italy}\\
\small{E-mail: antortora@unisa.it, mtota@unisa.it}}
\begin{document}
\maketitle

\begin{abstract} Let $m,n$ be positive integers, $v$ a multilinear commutator word and $w=v^m$. We prove that if $G$ is a locally graded group in which all $w$-values are $n$-Engel, then the verbal subgroup $w(G)$ is locally nilpotent.\\

\noindent{\bf 2010 Mathematics Subject Classification:} 20F45, 20F40\\
{\bf Keywords:} Engel elements, locally graded groups
\end{abstract}

\section{Introduction}

If $w$ is a group-word and $G$ is a group, the verbal subgroup $w(G)$ of $G$ is the subgroup generated by all $w$-values in $G$. Most of the words considered in this paper are {\it multilinear commutators}, also known under the name of {\it outer commutator words}. These are words that have a form of a multilinear Lie monomial, i.e., they are constructed by nesting commutators but using always different variables. For example the word
$$[[x_1,x_2],[y_1,y_2,y_3],z]$$
is a multilinear commutator  while the Engel word
$$[x,y,y,y]$$
is not.

An important family of multilinear commutators consists of the lower central words $\gamma_k$,
given by
\[
\gamma_1=x_1,
\qquad
\gamma_k=[\gamma_{k-1},x_k]=[x_1,\ldots,x_k],
\quad
\text{for $k\ge 2$.}
\]
The corresponding verbal subgroups $\gamma_k(G)$ are the terms of the lower central series of $G$.  Another distinguished sequence of outer commutator words are the derived words $\delta_k$, on $2^k$ variables, which are defined by
\[
\delta_0=x_1,
\quad
\delta_k=[\delta_{k-1}(x_1,\ldots,x_{2^{k-1}}),\delta_{k-1}(x_{2^{k-1}+1},\ldots,x_{2^k})],
\quad
\text{for $k\ge 1$.}
\]
The verbal subgroup that corresponds to the word $\delta_k$ is the familiar $k$-th derived subgroup of $G$ usually denoted by $G^{(k)}$.

Let $n$ be a positive integer and let $x,y$ be elements of a group $G$. The commutators $[x,_n y]$ are defined inductively by the rule
$$[x,_0 y]=x,\quad [x,_n y]=[[x,_{n-1} y],y].$$
An element $x$ is called a (left) Engel element if for any $g\in G$ there exists $n=n(x,g)\geq 1$ such that $[g,_n x]=1$. If $n$ can be chosen independently of $g$, then $x$ is a (left) $n$-Engel element. A group $G$ is called $n$-Engel if all elements of $G$ are $n$-Engel. It is a long-standing problem whether any $n$-Engel group is locally nilpotent. Following Zelmanov's solution of the restricted Burnside problem \cite{ze1,ze2}, Wilson proved that this is true if $G$ is residually finite \cite{w}. Later the first author showed that if in a residually finite group $G$ all commutators $[x_1,\ldots,x_k]$ are $n$-Engel, then $\gamma_k(G)$ is locally nilpotent \cite{shu0,shu2}. In the recent paper \cite{BSTT} a stronger result was obtained. Namely, it was proved that given positive integers $m,n$ and a multilinear commutator word $v$, if $G$ is a residually finite group in which all values of the word $w=v^m$ are $n$-Engel, then the verbal subgroup $w(G)$ is locally nilpotent.

In \cite{kr}, Kim and Rhemtulla extended Wilson's theorem by showing that any locally graded $n$-Engel group is locally nilpotent. Recall that a group is locally graded if every non-trivial finitely generated subgroup has a proper subgroup of finite index. The class of locally graded groups is fairly large and in particular it contains all residually finite groups. The purpose of the present paper is to extend the main result of \cite{BSTT} to locally graded groups. Thus, we will prove the following theorem.

\begin{thm}\label{main}
Let $m,n$ be positive integers, $v$ a multilinear commutator word and $w=v^m$. If $G$ is a locally graded group in which all $w$-values are $n$-Engel, then the verbal subgroup $w(G)$ is locally nilpotent.
\end{thm}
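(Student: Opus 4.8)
The plan is to localize, pass to a residually finite quotient where \cite{BSTT} applies, and then gain control of the finite residual.

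\emph{Localization.} A verbal subgroup is locally nilpotent precisely when every finitely generated subgroup of it is nilpotent, and every finitely generated subgroup of $w(G)$ is contained in $w(H)$ for some finitely generated subgroup $H$ of $G$. Since subgroups of locally graded groups are again locally graded and the hypothesis on $w$-values is inherited by subgroups, one reduces first to $G$ finitely generated and then to proving that every finitely generated subgroup $K$ of $w(G)$ that is generated by $w$-values is nilpotent; such a $K$ is finitely generated and locally graded, is generated by $n$-Engel elements, and has all of its $w$-values $n$-Engel.

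\emph{The residually finite quotient.} Let $R$ be the finite residual of $K$. Homomorphic images of $n$-Engel elements are $n$-Engel, so \cite{BSTT} applies to the residually finite group $K/R$ and yields that $w(K)R/R$ is locally nilpotent. Also $R\ne K$, since the finitely generated locally graded group $K$ has a proper subgroup of finite index; and, $K$ being generated by bounded left Engel elements, which in a finite group lie in the Fitting subgroup by Baer's theorem, every finite quotient of $K$ is nilpotent, so that $K/R$ is residually (finite nilpotent). Combining these facts with the Zelmanov-type ingredients underlying \cite{BSTT}, one aims to conclude that $K/R$ is a radical group, i.e. an ascending extension of locally nilpotent groups.

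\emph{The finite residual.} The crucial remaining point is that $R$ is locally nilpotent. Granting this, $K$ is an extension of the locally nilpotent group $R$ by the radical group $K/R$, hence $K$ is itself a radical group; since $K$ is generated by (left) Engel elements, the theorem of Gruenberg and Plotkin on Engel elements of radical groups forces $K$ to coincide with its Hirsch--Plotkin radical, so $K$ is locally nilpotent and, being finitely generated, nilpotent. To prove that $R$ is locally nilpotent I would induct on the number $s$ of $w$-value generators of $K$, the case $s=1$ being immediate. In the inductive step, write $K=\langle A,g\rangle$ with $A$ generated by the first $s-1$ of the $w$-values (nilpotent by induction) and $g$ the remaining one, and study the normal closure $A^{K}$ under the $n$-Engel action of $g$, invoking the Kim--Rhemtulla theorem \cite{kr} on locally graded Engel groups to keep the Engel estimates in hand, with the goal of showing that $A^{K}$, and hence $R$, is locally nilpotent.

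\emph{Main obstacle.} The heart of the difficulty is exactly this control of the finite residual $R$, which is invisible in the residually finite setting of \cite{BSTT}: a locally graded group with all finite quotients nilpotent can be very far from locally nilpotent (the Grigorchuk group, a finitely generated residually finite infinite periodic $2$-group, is such an instance), so the $n$-Engel hypothesis on the $w$-values, and with it the Zelmanov-type input imported through \cite{BSTT}, has genuinely to be exploited inside $R$. Arranging the induction so that the normal closures encountered remain within its scope is the delicate part of the argument.
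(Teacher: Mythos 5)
Your reduction to a finitely generated, locally graded subgroup $K$ generated by finitely many $w$-values, and your treatment of the top quotient $K/R$ (finite quotients of $K$ are nilpotent by Baer, so $K/R$ is residually finite-nilpotent and the results of \cite{BSTT} apply to it), are sound and broadly parallel to steps that occur in the paper. But the proposal does not prove the theorem: the step you yourself call the ``crucial remaining point'' --- that the finite residual $R$ is locally nilpotent --- is exactly where all the work lies, and what you offer for it is a plan, not an argument. Moreover, the plan as sketched is unlikely to go through: the Kim--Rhemtulla theorem \cite{kr} applies to locally graded groups in which \emph{every} element is $n$-Engel, whereas here only the $w$-values (and the chosen generators) are $n$-Engel, so it gives no purchase on $A^{K}$ or on $R$; and nothing in the $n$-Engel hypothesis on generators directly yields local nilpotence of a normal closure such as $A^{K}$ --- if it did, the residually finite case of \cite{BSTT} would itself admit a much shorter proof. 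There is also a smaller gap earlier: ``one aims to conclude that $K/R$ is a radical group'' is left unjustified (it can in fact be repaired, e.g.\ via Proposition 14 of \cite{BSTT}, which the paper generalizes in Proposition \ref{gamma}), but the decisive missing ingredient is the control of $R$.

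It is worth comparing with how the paper actually resolves this, because the route is genuinely different from ``show $R$ is locally nilpotent''. The paper first treats the case $v=\delta_k$ (Proposition \ref{v}): for a carefully chosen finitely generated subgroup $H=\langle y_1,\dots,y_{d-1},x\rangle$ with $x^m=y_d$ a $w$-value, it shows that all derived terms $H^{(i)}$ are finitely generated (Lemma \ref{HK} together with Lemma \ref{H^y}(iii)), that $H/R$ is nilpotent-by-(cyclic of order $m$) (via \cite[Theorem A]{BSTT}, \cite{LMS}, and the restricted-Burnside-type local finiteness criterion of Corollary \ref{residual}), and then that $R$ is \emph{trivial}: since $R=H^{(j)}=H^{(j+1)}$ is finitely generated and $G$ is locally graded, $R$ (if nontrivial) has a proper finite-index subgroup, and Hall's finiteness of the number of subgroups of a given index (Remark \ref{hall}) produces a characteristic finite-index subgroup $M$ of $R$ with $H/M$ nilpotent-by-cyclic, forcing $H^{(j+1)}<H^{(j)}$, a contradiction. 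The general multilinear commutator $v$ is then handled by passing to $G/\delta_k^m(G)$, where all $\delta_k$-values have order dividing $m$, and using Corollary \ref{residual} and Proposition \ref{gamma} to finish. So the Zelmanov-type input enters through a local finiteness theorem for verbal-type quotients, and the local gradedness is exploited to kill the finite residual outright rather than to show it locally nilpotent. Your proposal identifies the right obstacle but supplies none of this machinery, so as it stands it has a genuine gap at its central step.
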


Earlier the result was proved in \cite{BSTT} under the additional hypothesis that $m=1$. Our proof of Theorem \ref{main} is based on the techniques that Zelmanov created in his solution of the restricted Burnside problem. In particular a result in the spirit of the restricted Burnside problem is given in Corollary \ref{residual} below. It provides a sufficient condition under which a group must be locally finite.

It remains unclear whether Theorem \ref{main} can be extended to arbitrary words.
\section{The proof}

We say that a set $X$ is  {\it commutator-closed} if $[x,y]\in X$ whenever $x,y\in X$. Furthermore, given subgroups $H$ and $K$ of a group $G$, we denote by $H^K$ the smallest subgroup of $G$ containing $H$ and normalized by $K$.

In the first lemma we collect some results which can be found in \cite{BSTT}, \cite{shu} and \cite{shu3}, respectively.

\begin{lem}\label{H^y} Let $m\geq 1$ and $G$ be a group. Then the following hold.
\begin{enumerate}
\item[$(i)$]  Let $y$ be an element of $G$ and $H$ a finitely generated subgroup. If $y^m$ is Engel, then $H^{\langle y\rangle}$ is finitely generated. {\rm \cite[Corollary 2]{BSTT}}
\item[$(ii)$] Assume that $G$ is generated by a finite subset $X$. If $x^m$ is Engel for all $x\in X$, then the derived subgroup of $G$ is finitely generated. {\rm \cite[Lemma~4]{BSTT}}
\item[$(iii)$] Let $X$ be a normal commutator-closed subset of $G$. Assume that $G$ is generated by finitely many elements of $X$. If $x^m$ is Engel for all $x\in X$, then each term of the derived series of $G$ is finitely generated. {\rm \cite[Corollary 5]{BSTT}}
\item[$(iv)$]  Assume that $G$ is generated by a normal commutator-closed set $X$ of elements of finite order. If $G$ contains a subgroup $N$ of finite index such that $X\cap N=1$, then $G$ is locally finite. {\rm \cite[Lemma\ 3.4]{shu}}
\item[$(v)$] Let $v$ be a multilinear commutator. Then there exists $k\geq 1$ such that every $\delta_k$-value in $G$ is a $v$-value. {\rm \cite[Lemma 4.1]{shu3}}
\end{enumerate}
\end{lem}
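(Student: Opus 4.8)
The statement assembles five facts taken from \cite{BSTT}, \cite{shu} and \cite{shu3}, so in place of a single argument I would sketch the idea behind each, treating (i) as the load-bearing one. For (i) I would first pass from $\langle y\rangle$ to its finite-index subgroup $\langle y^m\rangle$: since $H^{\langle y\rangle}$ is the join of the finitely many conjugates $(H^{\langle y^m\rangle})^{y^i}$ with $0\le i<m$, and conjugation by $y$ merely permutes these cyclically (as $y^m$ normalizes $H^{\langle y^m\rangle}$), it suffices to show that $H^{\langle y^m\rangle}$ is finitely generated. In other words I may assume $y$ itself is Engel. Then for each of the finitely many generators $h$ of $H$ there is an $n$ with $[h,_n y]=1$. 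Expanding this iterated commutator as a word in the conjugates $h,h^y,\dots,h^{y^n}$, in which both the top term $h^{y^n}$ and the bottom term $h$ occur exactly once, and solving for these two terms, I get $h^{y^n},h\in\langle h^y,\dots,h^{y^{n-1}}\rangle$ together with the obvious endpoints; conjugating the resulting relations by powers of $y$ and inducting in both directions shows that every $h^{y^i}$ with $i\in\mathbb{Z}$ lies in the finitely generated subgroup $\langle h,h^y,\dots,h^{y^{n-1}}\rangle$. Taking the join over the generators $h$ gives that $H^{\langle y\rangle}$ is finitely generated.

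Parts (ii) and (iii) I would derive from (i). Writing $G=\langle x_1,\dots,x_r\rangle$ with each $x_i^m$ Engel, the derived subgroup is the normal closure $G'=\langle[x_i,x_j]:i,j\rangle^{G}$, so, starting from the finitely generated subgroup $\langle[x_i,x_j]\rangle$, I would close it up under conjugation by the generators, applying (i) one generator at a time. The delicate point, and where the hypothesis on $m$-th powers is genuinely needed, is that this closing-up must terminate in a finitely generated subgroup rather than an ascending union that fails to be finitely generated; controlling this is exactly the content of \cite[Lemma 4]{BSTT}. Part (iii) then follows by induction down the derived series: since $X$ is commutator-closed, the commutators generating each successive derived term again lie in $X$ and so have Engel $m$-th powers, and the relevant generating set is again normal and commutator-closed, so (ii) applies repeatedly.

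The last two parts stand somewhat apart. Part (iv) is a local-finiteness criterion from \cite{shu}: a group generated by a normal commutator-closed set $X$ of torsion elements that meets a finite-index subgroup $N$ trivially is locally finite, which I would approach by exploiting how the finite index of $N$ together with the conjugation-invariance and commutator-closure of $X$ constrain the torsion elements outside $N$. Part (v) is a purely combinatorial fact about the shape of outer commutator words: since a multilinear commutator $v$ is a nested commutator, substituting $\delta$-values (and $1$) into its variables realizes a $\delta_k$-value as a $v$-value once $k$ is large enough to absorb the nesting of $v$; this is \cite[Lemma 4.1]{shu3}.

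I expect the real difficulty to be concentrated in (i) and its consequences (ii) and (iii): the heart of the matter is converting the \emph{qualitative} Engel condition on $m$-th powers into the \emph{quantitative} assertion that normal closures and derived subgroups are finitely generated. This is precisely the mechanism that later permits the reduction of Theorem~\ref{main} to finitely generated groups, and thence, via local gradedness, to finite ones, so it is where I would invest the most care; parts (iv) and (v) are auxiliary and essentially self-contained.
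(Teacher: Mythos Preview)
The paper gives no proof of this lemma: it is stated purely as a collection of citations to \cite{BSTT}, \cite{shu} and \cite{shu3}, with no argument supplied. Your sketches therefore go beyond what the paper provides and are essentially correct in spirit. One minor imprecision in your treatment of~(i): from $[h,_n y]=1$ one obtains $h^{y^n}\in\langle h,h^y,\dots,h^{y^{n-1}}\rangle$ and, after conjugating by $y^{-1}$, also $h^{y^{-1}}\in\langle h,h^y,\dots,h^{y^{n-1}}\rangle$; neither endpoint lands in the span of the strictly interior conjugates alone as you wrote, but these two relations are exactly what the two-directional induction needs, so the conclusion stands. Your candid acknowledgement in~(ii) that successive closures under the generators need not stabilise, and that controlling this is the genuine content of \cite[Lemma~4]{BSTT}, is apt and mirrors the paper's own deferral to that reference.
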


In the sequel the following results will play an important role.

\begin{lem}\label{HK}
Let $m\geq 1$ and $G$ be a group generated by two finitely generated subgroups $H$ and $K$. Assume that $K=\l y_1,y_2,\ldots,y_d\r$, where each $y_i^m$ is Engel in $G$. If $K$ is nilpotent, then $H^G$ is finitely generated.
\end{lem}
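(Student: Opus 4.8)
The plan is to reduce to $H^K$ — note $H^G=H^K$, since $H^K$ contains $H$ and is normalized by $H$ (which it contains) and by $K$, hence by $G=\langle H,K\rangle$, while conversely $H^G$ is normalized by $K$ and contains $H$ — and then argue by induction on the pair $(c,d)$, ordered lexicographically with the nilpotency class $c$ of $K$ first and the number $d$ of the given generators $y_i$ second. Throughout I would use two elementary observations: that a conjugate of an element whose $m$-th power is Engel again has this property (because $(y^g)^m=(y^m)^g$ and conjugates of Engel elements are Engel), and that $K$, being finitely generated nilpotent, is polycyclic, so each of its subgroups is finitely generated and every generating set of such a subgroup contains a finite generating subset. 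The case $d=1$ is immediate from Lemma~\ref{H^y}$(i)$. The case $c=1$, $d\ge 2$ is the abelian one: with $K_0=\langle y_1,\dots,y_{d-1}\rangle$, the subgroup $B:=H^{K_0}$ is finitely generated by induction and $K_0$-invariant; commutativity forces $y_d^{\,k}y_i=y_iy_d^{\,k}$, hence $(B^{y_d^{\,k}})^{y_i}=B^{y_d^{\,k}}$, so $B^{\langle y_d\rangle}$ is $K_0$-invariant, therefore $K$-invariant, therefore equal to $H^K$; and it is finitely generated by Lemma~\ref{H^y}$(i)$ since $y_d^{\,m}$ is Engel.

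For the main step, $c\ge 2$ and $d\ge 2$, I would pass to $Y:=\langle y_d\rangle^{K}$, the normal closure of $y_d$ in $K$. The crucial claim is that $Y$ is nilpotent of class at most $c-1$: its generators $y_d^{\,g}$ ($g\in K$) all lie in $y_d\gamma_2(K)$, and a short computation of the form $[y_d^{\,g},y_d^{\,h}]=[y_d,y_d^{\,hg^{-1}}]^{g}=[y_d,[y_d,hg^{-1}]]^{g}$ shows $[y_d^{\,g},y_d^{\,h}]\in\gamma_3(K)$, whence $\gamma_2(Y)\le\gamma_3(K)$ and so $\gamma_i(Y)\le\gamma_{i+1}(K)$ for every $i$. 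Since $Y$ is also generated by finitely many of the $y_d^{\,g}$, each with Engel $m$-th power, the inductive hypothesis applies to $Y$ (its class being strictly smaller than $c$) and gives that $B:=H^{Y}$ is finitely generated; note $B$ is $Y$-invariant.

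Then, with $L:=\langle y_1,\dots,y_{d-1}\rangle$, I would set $C:=B^{L}$, which is finitely generated by the inductive hypothesis applied to $L$ (nilpotent of class $\le c$ but generated by only $d-1$ of the $y_i$, so $(c,d-1)$ precedes $(c,d)$). It remains to show $C=H^K$. One inclusion is clear, since all conjugations used lie in $\langle Y,L\rangle=K$ (as $y_d\in Y$), so $H\le B\le C\le H^K$. For the reverse, $C$ is $L$-invariant by construction, and it is $Y$-invariant because for $y\in Y$ and $w\in L$ one has $wy=(wyw^{-1})w$ with $wyw^{-1}\in Y$ (as $Y\trianglelefteq K$), so $B^{wy}=(B^{wyw^{-1}})^{w}=B^{w}$ using that $B$ is $Y$-invariant; hence conjugation by $y$ merely permutes the subgroups $B^{w}$ and fixes $C$. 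Thus $C$ is invariant under $\langle Y,L\rangle=K$, contains $H$, and is contained in $H^K$, so $C=H^K$ is finitely generated.

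I expect the reduction of the nilpotency class to be the only genuine obstacle: the obvious smaller normal subgroups of $K$, such as $[K,K]$, are generated by commutators in the $y_i$, which need not possess an Engel power, so the inductive hypothesis cannot be invoked for them; the device of passing to the normal closure of a single generator is precisely what simultaneously lowers the class (being generated by conjugates of one element) and preserves the Engel-power hypothesis. Everything else is routine manipulation of normal closures, together with repeated appeals to Lemma~\ref{H^y}$(i)$ in the cyclic situations.
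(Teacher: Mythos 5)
Your proof is correct and follows essentially the same route as the paper: reduce to $H^K$, induct on the nilpotency class and the number of generators, pass to the normal closure of a single generator (nilpotent of class at most $c-1$ and generated by finitely many conjugates whose $m$-th powers are Engel), handle the cyclic cases via Lemma \ref{H^y}$(i)$, and recombine using $K=\l y_1,\ldots,y_{d-1}\r\,\l y_d\r^K$. The only differences are cosmetic: you spell out the lexicographic induction, the class-reduction computation and the $K$-invariance of the final subgroup, which the paper leaves implicit.
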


\begin{proof} Clearly, $H^G=H^K$. Let $c$ be the nilpotency class of $K$. We use induction on $d$ and $c$, respectively. If $d=1$, the claim follows from part $(i)$ of Lemma \ref{H^y}. Assume $d>1$. If $c=1$, then
$$H^K=(((H^{\l y_1\r})^{\l y_2\r})\ldots)^{\l y_d\r}$$
and so $H^K$ is finitely generated by Lemma \ref{H^y} $(i)$. Suppose that $c>1$. Let $N=\l y_1\r^K$ and $J=\l y_2,\ldots,y_d\r$. Thus $N$ is nilpotent of class at most $c-1$.  Moreover, as $N$ is a subgroup of the finitely generated nilpotent group $K$, it is generated by finitely many elements whose $m$-th powers are Engel. Now the induction hypothesis on $c$ implies that $H^N$ is finitely generated and, by induction on $d$, so is $(H^N)^J$. Since $K=JN$, it follows that $H^K$ is finitely generated, as desired.
\end{proof}

\begin{lem}\label{fn}
Let $G$ be a finite nilpotent group generated by a normal com\-mutator-closed set $X$ of elements of order dividing $e$. If $G$ satisfies a law $w\equiv 1$ and is $d$-generated for some $d\geq 1$, then its order is $\{d,e,w\}$-bounded.
\end{lem}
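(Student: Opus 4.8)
\noindent\emph{Sketch of a proof.}

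\smallskip

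The plan is to first reduce to the case where $G$ is a finite $p$-group. Since $G$ is finite nilpotent it is the direct product of its Sylow subgroups, and because $G=\langle X\rangle$ while every element of $X$ has order dividing $e$, every prime divisor of $|G|$ divides $e$; hence the number of Sylow subgroups is $\{e\}$-bounded. The projection of $G$ onto any Sylow subgroup $P$ is surjective, so $P$ is $d$-generated, satisfies $w\equiv1$, and is generated by the image of $X$, which is again a normal commutator-closed set of elements of order dividing $e$. Thus it suffices to bound $|P|$, and from now on I assume $G$ is a finite $p$-group.

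\smallskip

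The next step is to exploit commutator-closedness. As $G=\langle X\rangle$ with $X$ normal and commutator-closed, each term $\gamma_i(G)$ of the lower central series and each term $G^{(i)}$ of the derived series is generated by the elements of $X$ lying in it: a commutator $[g_1,\ldots,g_i]$ with all $g_j\in\langle X\rangle$ expands, by the usual identities, into a product of conjugates and inverses of commutators $[x_1,\ldots,x_i]$ with $x_k\in X$, each of which belongs to $X\cap\gamma_i(G)$, and the same argument works for $\delta_i$-values. Consequently every section $\gamma_i(G)/\gamma_{i+1}(G)$ and $G^{(i)}/G^{(i+1)}$ is abelian and generated by elements of order dividing $e$, hence has exponent dividing $e$. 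In particular $\exp G$ divides $e^c$, where $c$ is the nilpotency class of $G$, and every metabelian section of $G$ has exponent dividing $e^2$.

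\smallskip

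This reduces the lemma to two tasks: (a) if the derived length of $G$ is at most $\ell$, then $|G|$ is $\{d,e,\ell\}$-bounded; (b) the derived length of $G$ is $\{d,e,w\}$-bounded. Task (a) is elementary, by induction on $\ell$: if $G$ is abelian then $|G|\le e^d$; if $G$ is metabelian then $G'$ is abelian of exponent dividing $e$, hence a module generated by the $\binom{d}{2}$ commutators of a $d$-element generating set over the finite ring $(\mathbb Z/e\mathbb Z)[G/G']$, which bounds $|G'|$ and $|G|$ in terms of $\{d,e\}$; and if $\ell\ge3$, one puts $N=G^{(\ell-2)}$, uses the inductive hypothesis for $G/G^{(\ell-1)}$ to bound $|N/N'|$ and hence the minimal number of generators of the metabelian group $N$, applies the metabelian case to $N$ (which inherits all the hypotheses with $X\cap N$ in place of $X$), and uses the inductive hypothesis for $G/N$, so that $|G|=|G/N|\,|N|$ is bounded.

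\smallskip

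Task (b) is where the law $w\equiv1$ and Zelmanov's techniques from the restricted Burnside problem are used, and this is the step I expect to be the hard part. The plan is to pass to the associated Lie ring $L=L(G)=\bigoplus_{i}\gamma_i(G)/\gamma_{i+1}(G)$; as observed above it is generated by $d$ elements (a generating set of $L_1\cong G/\gamma_2(G)$) and is annihilated by the $p$-part of $e$, and being the Lie ring of a finite nilpotent group it is nilpotent, so all its elements are ad-nilpotent. One then has to show that $L$ satisfies a non-trivial polynomial identity of degree bounded in terms of $w$ and that its generators are ad-nilpotent of $\{e,w\}$-bounded index; Zelmanov's nilpotency theorem for Lie rings satisfying a polynomial identity and generated by finitely many ad-nilpotent elements then bounds the class of $L$, hence the class --- and a fortiori the derived length --- of $G$ in terms of $\{d,e,w\}$. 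The main obstacle is precisely this translation of the group identity $w\equiv1$ into a polynomial identity and an ad-nilpotency bound on $L$, so that Zelmanov's theorem yields constants depending only on $\{d,e,w\}$.
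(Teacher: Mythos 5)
Your overall strategy (reduce to $p$-groups, exploit commutator-closedness to control the orders of elements in the relevant sections, then use Lie-ring methods and Zelmanov's nilpotency theorem to bound the class) is exactly the spirit of the proof the paper points to: the paper gives no argument of its own but cites Lemma 3.2 of Shumyatsky's paper \cite{shu1}, where this machinery is carried out. The reduction to $p$-groups and your task (a) are fine. However, there is a genuine gap: task (b), which you yourself flag as ``the main obstacle,'' is precisely the content of the lemma, and you do not prove it --- you only state what would have to be shown. As it stands the proposal establishes the easy bookkeeping and defers the decisive step (obtaining a polynomial identity depending only on $w$ and an ad-nilpotency bound, and then invoking Zelmanov) to a hoped-for argument.

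Two concrete points about that missing step. First, the Lie ring you propose, $L(G)=\bigoplus_i\gamma_i(G)/\gamma_{i+1}(G)$, is not the right vehicle: the standard tools apply to the Lie algebra $L_p(G)$ associated with the Zassenhaus--Jennings--Lazard filtration. For $L_p(G)$ one has the Wilson--Zelmanov theorem (a group law $w\equiv 1$ forces a polynomial identity depending only on $w$) and the standard lemma that an element of order $p^k$ has ad-nilpotent image of index at most $p^k$; neither of these is available in the form you need for the lower-central-series Lie ring. Second, the version of Zelmanov's theorem used here requires that \emph{every commutator in the chosen generators} be ad-nilpotent of bounded index, not just the generators themselves. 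This is exactly where the hypotheses on $X$ do their real work: since $G=\langle X\rangle$ is nilpotent and $d$-generated, one can choose $d$ generators inside $X$ (Burnside basis argument), and then commutator-closedness and normality of $X$ guarantee that every commutator in these generators lies in $X$, hence has order dividing $e$ and is ad-nilpotent of index at most the $p$-part of $e$ in $L_p(G)$. Your sketch uses commutator-closedness only to bound exponents of sections, which misses its essential role; once the above is in place, Zelmanov's theorem bounds the class of $L_p(G)$ in terms of $\{d,e,w\}$, and a counting over the Zassenhaus series (each homogeneous component being spanned by boundedly many commutators in the $d$ generators) bounds $|G|$, making your step (a) unnecessary.
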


\begin{proof}
The proof is really very similar to that of Lemma 3.2 of \cite{shu1}. The modifications required in our situation are self-evident. We therefore omit the details.
\end{proof}

Let $G$ be a finite soluble group with Fitting subgroup $F(G)$. Recall that the Fitting height of $G$ is the least integer $h$ such that $F_h(G)=G$ where, as usual, $F_0(G)=1$ and $F_{i}(G)/F_{i-1}(G)=F(G/F_{i-1}(G))$ for any $i\geq1$. We denote by $\mathcal{N}^h$ the class of all finite soluble groups of Fitting height at most~$h$.

\begin{thm}\label{locfin}
Let $G$ be a group satisfying some group-law and suppose that $G$ is generated by a normal com\-mutator-closed set $X$ of elements of finite order. If $G$ is residually-$\mathcal{N}^h$, then $G$ is locally finite.
\end{thm}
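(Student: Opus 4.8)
The plan is to reduce to the finitely generated case and then exploit the structure of finite soluble groups of bounded Fitting height. First I would observe that local finiteness is a local property, so it suffices to prove that every finitely generated subgroup of $G$ is finite; replacing $G$ by such a subgroup, we may assume $G=\langle x_1,\dots,x_d\rangle$ with all $x_i\in X$, and $G$ still satisfies the given law and is residually-$\mathcal{N}^h$ (intersecting the given normal subgroups with the finitely generated subgroup). Since $X$ is normal and commutator-closed, part $(iii)$ of Lemma \ref{H^y} would be available once we know the $m$-th powers are Engel — but here we must be careful: the hypothesis is only that elements of $X$ have finite order, not that their powers are Engel. So instead I would use Lemma \ref{fn} as the quantitative engine: it says a finite \emph{nilpotent} $d$-generated group generated by a normal commutator-closed set of elements of bounded exponent, satisfying the law, has bounded order.

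The main step is then an induction on the Fitting height $h$. For $h=1$ the residual quotients are nilpotent, so $G$ is residually (finite nilpotent of bounded rank and bounded exponent on $X$); one needs first that $G$ has finite exponent on $X$ — the elements of $X$ have finite, but a priori unbounded, orders. Here I would argue that in a finitely generated residually-$\mathcal{N}^h$ group satisfying a law, using that $X$ generates $G$ and is commutator-closed, the orders of the $x_i$ together with the law force a bound on the exponent of the relevant sections; more precisely, by a standard compactness/inverse-limit argument the law $w\equiv1$ plus $d$-generation bounds the order of each finite nilpotent quotient via Lemma \ref{fn}, hence bounds $|G|$ when $h=1$. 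For the inductive step, let $M=F_{h-1}$-part: in each quotient $G/N$ ($N$ one of the defining normal subgroups) we have a normal subgroup $R_N/N$ of Fitting height $\le h-1$ with nilpotent quotient $(G/N)/(R_N/N)$. Taking $R=\bigcap_N R_N$, the quotient $G/R$ is residually nilpotent, finitely generated, generated by the image of $X$ (commutator-closed, torsion), and satisfies the law, so by the $h=1$ case $G/R$ is finite; and $R$ is finitely generated (as a finite-index subgroup of the finitely generated group $G$), normally generated by finitely many conjugates of the $x_i$ lying in $R\cap X^{\pm}\cdots$, is residually-$\mathcal{N}^{h-1}$, and is generated by a normal commutator-closed set of torsion elements — so by induction $R$ is locally finite, hence finite since finitely generated, whence $G$ is finite.

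The delicate point — and the main obstacle — is precisely the passage from ``each defining quotient is $d$-generated of bounded Fitting height'' to a \emph{uniform} bound on $|G/N|$, i.e.\ controlling the exponent issue. A priori the finite nilpotent sections at the bottom of the Fitting series could have unbounded exponent, and Lemma \ref{fn} needs bounded $e$. I expect this is handled by first proving that $G$ (finitely generated, satisfying a law) is \emph{locally finite} would be circular, so instead one shows the orders of the generators $x_i$ — which are fixed finite numbers once $G$ is fixed — propagate: since $X$ is commutator-closed and normal and generates $G$, every element of $X$ lies in $\langle x_1^{G},\dots,x_d^{G}\rangle$ and one can bound the order of the image of any $x\in X$ in a nilpotent quotient in terms of $\max_i |x_i|$ and the nilpotency class, the latter being bounded by Lemma \ref{fn} applied iteratively. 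Making this bookkeeping precise, and checking that the normal closure $R$ inherits a normal commutator-closed generating set of torsion elements (so the induction genuinely applies), is where the real work lies; everything else is assembling Lemmas \ref{H^y}$(iii)$, \ref{HK}, and \ref{fn} with the residual hypothesis.
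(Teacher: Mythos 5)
Your overall strategy (reduce to a subgroup generated by finitely many elements of $X$, induct on $h$, settle $h=1$ via Lemma \ref{fn} and residual nilpotence) is the paper's, but the inductive step has a genuine gap exactly at the point you yourself flag. Having produced the finite-index normal subgroup $R$ (the paper's $H$, the intersection of the normal subgroups with nilpotent quotient), you want to apply the induction hypothesis to $R$, and for that you simply assert that $R$ ``is generated by a normal commutator-closed set of torsion elements.'' Nothing gives this: $X\cap R$ is indeed normal and commutator-closed, but there is no reason it should generate $R$, and no amount of bookkeeping with conjugates of the $x_i$ produces such a generating set. The paper avoids the problem rather than solving it: the induction hypothesis is applied not to $H$ but to $K=\langle X\cap H\rangle$, which is a normal locally finite subgroup of $G$; then one passes to $G/K$, which is generated by the image of $X$ (normal, commutator-closed, torsion) and contains the finite-index subgroup $H/K$ meeting that image trivially (any $x\in X$ lying in $H$ already lies in $K$), so Lemma \ref{H^y}$(iv)$ gives that $G/K$ is locally finite, and $G$ is locally finite as an extension of locally finite by locally finite. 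Lemma \ref{H^y}$(iv)$, which you never invoke, is precisely the missing device; without it (or a substitute) your induction does not close.

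Two further remarks. In the base case $h=1$, your proposed repair of the exponent issue --- bounding the order of the image of an arbitrary $x\in X$ in terms of $\max_i|x_i|$ and a nilpotency class ``bounded by Lemma \ref{fn} applied iteratively'' --- is circular as stated, since the bound in Lemma \ref{fn} presupposes the very exponent bound you are trying to derive; the paper's proof simply applies Lemma \ref{fn} to each finite nilpotent quotient $G/N$ to get a bound on $|G/N|$ independent of $N$ (and in the paper's applications of Theorem \ref{locfin} the generating set does have exponent dividing $m$), concluding that $G$ is finite. You are right, however, to discard Lemma \ref{H^y}$(iii)$: the Engel hypothesis is absent here, and neither it nor Lemma \ref{HK} plays any role in this theorem.
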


\begin{proof}
Suppose first that $G$ is generated by finitely many elements of $X$ and argue by induction on $h$. If $h=1$, then $G$ is residually finite-nilpotent. Let $N$ be a normal subgroup of $G$ such that $G/N$ is finite nilpotent. By Lemma \ref{fn}, the order of $G/N$ is bounded by certain number which does not depend on $N$. It follows that $G$ is finite. Assume now that $h>1$ and let $H$ be the intersection of all normal subgroups $M$ of $G$ such that $G/M$ is nilpotent. The previous argument shows that $G/H$ is finite. It is easy to verify that $H$ is residually-$\mathcal{N}^{h-1}$. So, by the induction hypothesis, $X\cap H$ generates a normal locally finite subgroup $K$ of $G$. Now, looking at the quotient $G/K$, the claim is immediate from Lemma \ref{H^y} $(iv)$.

Let us now drop the assumption that $G$ is generated by finitely many elements from $X$ and choose any finitely generated subgroup $H$ of $G$. Then $H\leq K$ where $K$  is generated by finitely many elements from $X\cap K$ and, of course, the set $X\cap K$ is normal and commutator closed in $K$. Furthermore, $K$ is residually-$\mathcal{N}^h$. It follows from what we have shown in the previous paragraph that $K$ is finite. In particular, $H$ is finite and $G$ is locally finite.
\end{proof}

In what follows the intersection of all subgroups of finite index of a group $G$ is called the finite residual of $G$. We will require a corollary of the above theorem.

\begin{cor}\label{residual}
Let $G$ be a locally graded group generated by a normal com\-mutator-closed set $X$ of elements of finite order. Let $R$ be the finite residual of $G$ and suppose that $G/R$ is a residually-$\mathcal{N}^h$ group satisfying some group-law. Then $G$ is locally finite.
\end{cor}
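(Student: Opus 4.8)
The plan is to treat the quotient $G/R$ by means of Theorem~\ref{locfin}, and then to push local finiteness back down to $G$ using the two remaining hypotheses: that $G$ is locally graded and that $X$ consists of torsion elements. The tool for the second part is Lemma~\ref{H^y}$(iv)$, which converts ``a finite-index subgroup disjoint from $X$'' into local finiteness.

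First I would observe that, since $\mathcal N^h$ consists of finite groups, $G/R$ is in particular residually finite; being also residually-$\mathcal N^h$, satisfying a group-law, and generated by the normal commutator-closed set $XR/R$ of torsion elements, it satisfies the hypotheses of Theorem~\ref{locfin}, so $G/R$ is locally finite. Now it is enough to prove that every subgroup $K$ generated by finitely many elements of $X$ is finite, because an arbitrary finitely generated subgroup of $G$ is contained in such a $K$. Fix $K=\langle x_1,\dots,x_s\rangle$ with $x_i\in X$; then $X\cap K$ is a normal commutator-closed set of torsion elements generating $K$, and $K$ is itself locally graded. Since $KR/R\cong K/(K\cap R)$ is a finitely generated subgroup of the locally finite group $G/R$, it is finite, so $K\cap R$ has finite index in $K$. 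If it happened that $X\cap R=1$, we would be done immediately: then $(X\cap K)\cap(K\cap R)=1$, and Lemma~\ref{H^y}$(iv)$ applied to $K$ with the finite-index subgroup $K\cap R$ gives that $K$ is locally finite, hence (being finitely generated) finite.

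In general one has to quotient out the ``invisible'' part of $X$. Set $N=\langle X\cap K\cap R\rangle$. Since $X\cap K$ is $K$-invariant and $K\cap R\trianglelefteq K$, we get $N\trianglelefteq K$ and $N\le R$; in $K/N$ the image of $X\cap K$ is a normal commutator-closed set of torsion elements generating $K/N$, and it meets the finite-index subgroup $(K\cap R)/N$ trivially (if $xN\in(K\cap R)/N$ with $x\in X\cap K$, then $x\in K\cap R$, so $x\in X\cap K\cap R\subseteq N$). By Lemma~\ref{H^y}$(iv)$, $K/N$ is locally finite, hence finite; therefore $N$ has finite index in $K$, and in particular $N$ is finitely generated — generated, in fact, by finitely many elements of $X\cap K\cap R$.

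Thus everything reduces to showing that $N$ is finite (then $K$, being $N$-by-finite, is finite), i.e.\ that finitely generated subgroups of the finite residual $R$ are finite; this is the main obstacle. The difficulty is that, unlike $G/R$, the subgroup $N\le R$ need not inherit residual-$\mathcal N^h$-ness or the law, so Theorem~\ref{locfin} does not apply to $N$ directly, and one cannot naively recurse (applying the construction above to $N$ reproduces $N$, since $N\le R$). The approach I would take is an induction on $h$: using that $G/R$ is locally finite and residually-$\mathcal N^h$, one finds a normal subgroup $S$ of $G$ with $R\le S$, such that $G/S$ is residually-$\mathcal N^{h-1}$ (so $S$ again has finite residual $R$ and quotient satisfying the corollary's hypotheses, but with the parameter lowered) and the top layer $S/R$ is nilpotent-like — a ``limit'' of the Fitting subgroups of the finite quotients of $G/R$ — which can be controlled by Lemmas~\ref{fn} and~\ref{H^y}$(iv)$. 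Feeding $N\le S$ into this should force $N$ to be finite; the delicate step is carrying out the passage from $h$ to $h-1$ while keeping the generating set normal and commutator-closed and not losing the law.
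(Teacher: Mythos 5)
Your opening move (Theorem \ref{locfin} applied to $G/R$, giving local finiteness of $G/R$) matches the paper, but the proposal then stalls exactly where the real work lies: your last paragraph is an admitted sketch of an induction on $h$, with the ``delicate step'' left uncarried out, so this is a genuine gap rather than a proof. Worse, the route you sketch attacks the wrong object. Nothing in the hypotheses gives any residual-$\mathcal{N}^h$ or law information \emph{inside} $R$, which is precisely why your reduction to $N=\langle X\cap K\cap R\rangle$ leads nowhere: you are trying to analyse the structure of finitely generated subgroups of the finite residual, and the hypotheses simply do not reach there. The tell-tale sign is that local gradedness, which is the decisive hypothesis, appears in your argument only as a passing remark about $K$ and is never actually used.

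The missing idea is that local gradedness, combined with the fact that $R$ is the \emph{finite residual}, kills $R$ outright once one is in the finitely generated case. Reduce to $G$ finitely generated (by passing, as you began to do, to a subgroup generated by finitely many elements of $X$). By Theorem \ref{locfin}, $G/R$ is locally finite, hence finite, so $R$ has finite index in $G$ and is therefore finitely generated. If $R\neq 1$, then $R$, being locally graded, has a proper subgroup $T$ of finite index; since $[G:R]$ is finite, $T$ has finite index in $G$, and so $R\le T$ because $R$ is contained in every finite-index subgroup of $G$ --- contradicting $T< R$. Hence $R=1$, and Theorem \ref{locfin} now applies to $G$ itself, giving that $G$ is locally finite (finite, being finitely generated). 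No analysis of subgroups of $R$, no appeal to Lemma \ref{H^y}$(iv)$ at this stage, and no induction on $h$ beyond the one already inside Theorem \ref{locfin} is needed.
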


\begin{proof}
Assume that $G$ is finitely generated. If $R=1$, the claim follows from Theorem \ref{locfin}. Let $R\neq1$. Then $G/R$ is finite by Theorem \ref{locfin} and so $R$ is finitely generated. Since $R$ is locally graded, we conclude that $R$ has a proper subgroup of finite index in $G$, a contradiction.
\end{proof}

In any group $G$ there exists a unique maximal normal locally nilpotent
subgroup (called the Hirsch-Plotkin radical) containing all normal locally nilpotent subgroups of $G$ \cite[12.1.3]{Rob}. In general, it is a subset of the set $L(G)$ of all (left) Engel elements \cite[12.3.2]{Rob}. However, it coincides with $L(G)$ whenever $G$ is soluble (Gruenberg, \cite[12.3.3]{Rob}), or $G$ satisfies the maximal condition (Baer, \cite[12.3.7]{Rob}), or $G$ has an ascending series with locally nilpotent factors (Plotkin, \cite[Exercise 12.3.7]{Rob}). More generally, by  Plotkin \cite{Pl}, this is true even if the group has an ascending series whose factors satisfy max locally (i.e., every finitely generated subgroup satisfies the maximal condition).

\begin{rem}\label{hall}
By Theorem $7.2.9$ of {\rm \cite{MHall}}, there are only finitely many subgroups of any given finite index in a free group of finite rank. It follows that in any finitely generated group there are only finitely many subgroups of that index.
\end{rem}

It was shown in \cite{LMS} that the quotient of a locally graded group over a normal locally nilpotent subgroup is again locally graded. This will be used in the proof of the following proposition.

\begin{pro}\label{v}
Let $k,m,n$ be positive integers, $v=\delta_k$ and $w=v^m$. If $G$ is a locally graded group in which all $w$-values are $n$-Engel, then the verbal subgroup $w(G)$ is locally nilpotent.
\end{pro}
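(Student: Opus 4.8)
The plan is to split the problem along the finite residual of $G$: over the residual quotient the assertion is the main theorem of \cite{BSTT}, the residual itself is dealt with by the restricted-Burnside-type statements of the previous pages (Lemma~\ref{fn}, Theorem~\ref{locfin}, Corollary~\ref{residual}) together with the finiteness lemmas, and the two pieces are glued by the theorem of Plotkin \cite{Pl} that in a group with an ascending series of locally nilpotent factors the left Engel elements form the Hirsch--Plotkin radical.

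First I would reduce to $G$ finitely generated. Local nilpotency of $w(G)$ just means that every finitely generated $H\leq w(G)$ is nilpotent; the generators of such an $H$ are products of $w$-values, and a $w$-value is $\delta_k$ evaluated on finitely many elements of $G$ and raised to the power $m$, so $H\leq w(G_0)$ for the finitely generated (hence locally graded) subgroup $G_0$ generated by all those elements, and every $w$-value of $G_0$ is $n$-Engel. So assume $G$ finitely generated, and let $D$ be the set of $\delta_k$-values of $G$. A conjugate of a $\delta_k$-value is a $\delta_k$-value; and since for $k\geq1$ one has $\delta_k=[\delta_{k-1},\delta_{k-1}]$ on disjoint variables, every $\delta_k$-value is also a $\delta_{k-1}$-value, so a commutator of two $\delta_k$-values is a commutator of two $\delta_{k-1}$-values, i.e.\ again a $\delta_k$-value. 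Hence $D$ is a normal commutator-closed set, $G^{(k)}=\langle D\rangle$, $w(G)=\langle d^m:d\in D\rangle\leq G^{(k)}$, and by hypothesis $d^m$ is $n$-Engel -- in particular Engel -- for every $d\in D$.

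Now let $R$ be the finite residual of $G$. Then $G/R$ is residually finite and inherits the hypothesis, so by \cite{BSTT} the subgroup $w(G/R)=w(G)R/R\cong w(G)/(w(G)\cap R)$ is locally nilpotent. The heart of the matter is to show that $w(G)\cap R$ -- indeed that $R$ -- is locally nilpotent. For this I would use $D$: for any finite $\{d_1,\dots,d_s\}\subseteq D$, the set $D\cap\Gamma$ with $\Gamma=\langle d_1,\dots,d_s\rangle$ is a normal commutator-closed set of elements whose $m$-th powers are $n$-Engel, so Lemma~\ref{H^y} $(iii)$ (together with Lemma~\ref{H^y} $(i)$ and Lemma~\ref{HK} at the successive steps) makes every term of the derived series of $\Gamma$ finitely generated; this finiteness, fed into Lemma~\ref{fn}, Theorem~\ref{locfin}, Corollary~\ref{residual} and the local-finiteness criterion Lemma~\ref{H^y} $(iv)$, should force the torsion parts of the relevant sections of $G^{(k)}$ to be locally finite, and such a section, being generated by a normal commutator-closed set of $n$-Engel elements (and finite Engel groups being nilpotent), is then locally nilpotent; combining this with the residually finite conclusion and repeatedly invoking \cite{LMS} to keep quotients locally graded, one should arrive at $w(G)\cap R$ locally nilpotent. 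Granting this, $w(G)$ has the normal series $1\trianglelefteq w(G)\cap R\trianglelefteq w(G)$ with both factors locally nilpotent, hence is a group with an ascending series of locally nilpotent factors; by Plotkin's theorem its left Engel elements lie in its Hirsch--Plotkin radical $\rho$. Since every $w$-value, being $n$-Engel in $G$, is a left Engel element of $w(G)$, it lies in $\rho$, so $w(G)=\langle d^m:d\in D\rangle\leq\rho$ is locally nilpotent; in particular the finitely generated $H$ we started from is nilpotent.

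The single genuine obstacle is the claim that $w(G)\cap R$ (equivalently $R$) is locally nilpotent -- this is exactly the point at which the locally graded hypothesis has not yet been brought into residually finite form, and it has to be settled by the restricted-Burnside-type machinery (Lemma~\ref{fn}, Theorem~\ref{locfin}, Corollary~\ref{residual}) carefully dovetailed with the finite-generation results of Lemma~\ref{H^y} and Lemma~\ref{HK}; by comparison the reduction to finitely generated groups and the final gluing through Plotkin's theorem and \cite{BSTT} are routine.
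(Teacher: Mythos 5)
Your frame---reduce to $G$ finitely generated, split along the finite residual $R$ of $G$, handle $G/R$ by \cite{BSTT}, and glue with Plotkin's theorem---is sound as far as it goes: $G/R$ is indeed residually finite, and the final step (a normal series of $w(G)$ with locally nilpotent factors plus Plotkin \cite{Pl} forces $w(G)$ into its Hirsch--Plotkin radical) is exactly how the paper concludes its main theorem. But your proof has a genuine gap at precisely the point you yourself flag: you never prove that $w(G)\cap R$ (let alone $R$) is locally nilpotent; you only assert that the Burnside-type machinery ``should force'' it. That assertion cannot be made good in the form you describe. Lemma \ref{fn}, Theorem \ref{locfin}, Corollary \ref{residual} and Lemma \ref{H^y}\,$(iv)$ all require a group generated by a normal commutator-closed set of elements of \emph{finite order}, together with a law and a residually-$\mathcal{N}^h$ condition modulo the finite residual; none of this is available for $R$ or $w(G)\cap R$. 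The finite residual of $G$ need not be finitely generated, need not be generated by $\delta_k$-values, Engel elements, or torsion elements, and the locally graded hypothesis says nothing about it directly, since that hypothesis only bites on nontrivial \emph{finitely generated} subgroups. In effect, the only way one knows $w(G)\cap R$ is locally nilpotent is as a consequence of Proposition \ref{v} itself, so the plan is circular at its core.

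Compare this with what the paper actually does: it never attempts to control the finite residual of $G$. It fixes $w$-values $y_1,\dots,y_d$ generating a given finitely generated subgroup of $w(G)$, inducts on $d$, and works inside $H=\langle y_1,\dots,y_{d-1},x\rangle$ where $x$ is a $\delta_k$-value with $x^m=y_d$. Lemma \ref{HK} and Lemma \ref{H^y}\,$(iii)$ give that every term $H^{(i)}$ is finitely generated; Theorem A of \cite{BSTT}, Corollary \ref{residual} (applied to $J=H/S$, where the $\delta_k$-values have order dividing $m$) and Plotkin then show that $H/R_H$ is nilpotent-by-(cyclic of order dividing $m$), where $R_H$ is the finite residual of $H$, so that $H^{(j)}=R_H=H^{(j+1)}$ for some $j$ and in particular $R_H$ is finitely generated. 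Only now does local gradedness enter: if $R_H\neq 1$ it has a proper subgroup of finite index, and Remark \ref{hall} produces a characteristic subgroup $M$ with $R_H/M$ finite, whence $H/M$ is nilpotent-by-cyclic and $H^{(j+1)}<H^{(j)}$, a contradiction. Hence $R_H=1$, $H$ is soluble, and Gruenberg \cite[12.3.3]{Rob} gives nilpotency of $\langle y_1,\dots,y_d\rangle$. To repair your outline you would have to replace the missing ``heart'' by an argument of this kind---showing the finite residual of a carefully chosen finitely generated subgroup is trivial---rather than trying to tame the finite residual of $G$ itself.
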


\begin{proof} Denote by $X$ the set of all $\delta_k$-values in $G$ and choose a finitely generated subgroup $V$ of $w(G)$. Clearly, there exist finitely many $w$-values $y_1,\dots,y_d$ such that $V\leq\langle y_1,\dots,y_d\rangle$. Set $W=\langle y_1,\dots,y_d \rangle$. The proposition will be proved once it is shown that $W$ is nilpotent. We will use induction on $d$.

Let $H=\l y_1,\dots,y_{d-1},x\r$ where $x$ is a $\delta_k$-value such that $x^m=y_d$. Since $y_1,\dots,y_{d-1}$ and $x^m$ are Engel elements, we remark that every finite-by-soluble quotient of $H$ is an extension of a nilpotent group by a cyclic group of order dividing $m$ (\cite[12.3.3]{Rob}, \cite{Pl}). In particular, every finite quotient of $H$ is in the class $\mathcal{N}^2$. Set $N=\l x\r^H$. By the induction hypothesis the subgroup $\l y_1,\dots,y_{d-1}\r$ is nilpotent and so, by Lemma \ref{HK}, $N$ is finitely generated. More precisely, $N$ is generated by finitely many $\delta_k$-values. Furthermore, $X\cap N$ is a normal commutator-closed subset of $N$. Thus, by part $(iii)$ of Lemma \ref{H^y}, $N^{(i)}$ is finitely generated for every $i$. As a consequence, we obtain that $H^{(i)}$ is also finitely generated for every $i$. In fact $H/N^{(i)}$ is soluble and therefore it is polycyclic. Hence, $H^{(i)}/N^{(i)}$ is finitely generated and so is $H^{(i)}$. Let $R$ be the finite residual of $H$ and $S/R$ be the Hirsch-Plotkin radical of $H/R$. Then $w(H)R\leq S$, by \cite[Theorem~A]{BSTT}. Set $J=H/S$. By \cite{LMS}, $J$ is locally graded and, since $w(H)\leq S$, the $\delta_k$-commutators in $J$ have finite order dividing $m$. Corollary \ref{residual} now shows that $J^{(k)}$ is locally finite. Taking into account that $J^{(k)}$ is a homomorphic image of $H^{(k)}$ and that $H^{(k)}$ is finitely generated, we conclude that $J^{(k)}$ is finite. Thus $J$ is finite-by-soluble and $H/R$ is (locally nilpotent)-by-nilpotent-by-cyclic. But any Engel element in such a group lies in the Hirsch-Plotkin radical \cite{Pl}. So $H/R$ is nilpotent-by-(cyclic of order $m$) and $H^{(j)}\leq R$ for some $j$. On the other hand $H/H^{(j+1)}$ is residually finite because it has a nilpotent subgroup of finite index \cite[12.3.3]{Rob}, so that we must have $H^{(j)}=R=H^{(j+1)}$. Therefore $R$ is generated by finitely many elements. Since $G$ is locally graded, there exists a proper subgroup $T$ of $R$ of finite index $t$. By Remark \ref{hall}, $R$ contains only finitely many subgroups of index $t$. This implies that the intersection $M$ of such subgroups has finite index in $R$. Of course, $M$ is characteristic and $R/M$ is finite. We deduce that $H/M$ is nilpotent-by-cyclic \cite{Pl}. It follows that $H^{(j+1)}$ is a proper subgroup of $H^{(j)}$, which is impossible. This means that $R$ must be necessarily trivial. Thus $H$ is soluble and, since $W\leq H$, we conclude that $W$ is nilpotent \cite[12.3.3]{Rob}.
\end{proof}

\begin{lem}\label{rfbf}
Let $G$ be a finitely generated group such that $G'$ is finitely generated and residually finite. Then $G$ is residually finite.
\end{lem}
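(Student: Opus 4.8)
The plan is to realize $G$ as a subdirect-product-style extension and then push finiteness up from the finitely generated residually finite derived subgroup $G'$ to all of $G$. First I would fix a finite generating set of $G$, say $G=\langle g_1,\dots,g_r\rangle$, and let $F=F(x_1,\dots,x_r)$ be the free group of rank $r$ with the canonical epimorphism $\pi\colon F\to G$; write $N=\ker\pi$ and $N'=[N,N]$. Then $G\cong F/N$, while $F/N'$ is the group we want to compare with: it is a finitely generated extension of the abelian group $N/N'$ by $G$, and $G'\cong N'' \cdot$(something) — more to the point, $F'/N'$ maps onto $G'$ with kernel $(F'\cap N)/N'$, and since $G'$ is finitely generated we may choose finitely many elements of $F'$ whose images together with $N'$ generate $F'\cap N$. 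The essential point to extract is this: because $G'$ is finitely generated, $N$ is generated \emph{as a normal subgroup of $F$} by finitely many elements together with one further fact about how $F/N'$ sits over $G$.

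The cleaner route, which I would actually carry out, is to argue directly inside $G$. Let $A=G'$; by hypothesis $A$ is finitely generated and residually finite, and $G/A$ is finitely generated abelian, hence polycyclic and in particular residually finite. Take a nontrivial element $g\in G$; I must produce a normal subgroup of finite index of $G$ missing $g$. If $g\notin A$, its image in $G/A$ is nontrivial, and since $G/A$ is residually finite we are done by pulling back. So assume $1\neq g\in A$. Since $A$ is residually finite, there is $K\trianglelefteq_{f} A$ with $g\notin K$. The obstruction — and this is the heart of the matter — is that $K$ need not be normal in $G$, so I replace it by $K_0=\bigcap_{x\in G} K^x$. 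For this to have finite index in $A$ I need the conjugation action of $G$ on $A$ to move $K$ through only finitely many subgroups. This is where finite generation of $G'=A$ enters a second time: $A$ has only finitely many subgroups of index $|A:K|$ by Remark \ref{hall}, so the orbit of $K$ under $G$-conjugation is finite, whence $K_0$ has finite index in $A$, is normal in $G$, and still avoids $g$ (as $g\notin K\supseteq K_0$).

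It remains to upgrade $K_0 \trianglelefteq_f A$ to a subgroup of finite index in $G$ avoiding $g$. Since $G/A$ is polycyclic, it is residually finite and even "good" in the sense that $A/K_0$, being finite, has only finitely many $G$-conjugates — but $K_0$ is already $G$-invariant, so $G/K_0$ is an extension of the finite group $A/K_0$ by the polycyclic group $G/A$; such a group is residually finite (polycyclic-by-finite groups are residually finite, by, e.g., \cite[Theorem 7.2.9]{MHall} applied in the polycyclic-by-finite setting, or directly since finite-by-(residually finite with the congruence-type property) works here after one more intersection). Concretely: in $G/K_0$ the image $\bar g$ is nontrivial and lies in the finite normal subgroup $A/K_0$; intersecting the finitely many finite-index subgroups of $G/K_0$ of a suitable index — which exist because $G/K_0$ is finitely generated with polycyclic quotient $G/A$ — produces a finite-index normal subgroup of $G/K_0$ meeting $A/K_0$ trivially, hence avoiding $\bar g$. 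Pulling this back to $G$ gives a finite-index normal subgroup of $G$ not containing $g$. As $g$ was an arbitrary nontrivial element, $G$ is residually finite. The one step I expect to require the most care is verifying that the extension $G/K_0$ of a finite group by a polycyclic group is residually finite in a way that separates a prescribed element of the finite part — but this is standard (it follows from the fact that polycyclic groups, and more generally polycyclic-by-finite groups, are residually finite), so no genuine obstacle arises.
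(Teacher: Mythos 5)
Your argument is correct and is essentially the paper's own proof: you intersect the $G$-conjugates of a finite-index subgroup of $G'$ avoiding the given element $g$, invoke Remark \ref{hall} (finitely many subgroups of a given index in the finitely generated group $G'$) to see that the intersection still has finite index in $G'$ and is normal in $G$, and then separate $g$ in the resulting finite-by-polycyclic, hence polycyclic-by-finite, hence residually finite quotient. The abandoned free-group sketch at the start and the case split $g\notin G'$ versus $g\in G'$ are harmless embellishments --- the paper handles both cases at once by choosing a finite-index subgroup $H$ of $G'$ with $g\notin H$ --- and your closing remark about a finite-index subgroup meeting $G'/K_0$ trivially is unnecessary once residual finiteness of the quotient is in hand.
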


\begin{proof}
Let $1\neq g\in G$. Choose a subgroup $H$ of finite index in $G'$ such that $g\notin H$. Set $N=\bigcap_{x\in G} H^x$. By Remark \ref{hall}, $N$ has finite index in $G'$. Pass to the quotient $G/N$. Obviously this quotient is polycyclic-by-finite and hence residually finite. Therefore $G/N$ contains a subgroup $K/N$ of finite index such that $gN\notin K/N$. We see that the subgroup $K$ has finite index in $G$ and $g\notin K$. Thus, for any nontrivial element in $G$ there exists a subgroup of finite index that does not contain the element. This means that $G$ is residually finite.
\end{proof}

The next result is a generalization of Proposition 14 of \cite{BSTT}.

\begin{pro}\label{gamma}
Let $m,n$ be positive integers, $v$ a multilinear commutator word and $w=v^m$. Let $G$ be a group in which all $w$-values are $n$-Engel and assume additionally that $G$ is generated by finitely many Engel elements. If $\gamma_i(G)$ is residually finite for some $i\geq 1$, then $G$ is nilpotent.
\end{pro}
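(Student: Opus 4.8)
The plan is to reduce to the case $v=\delta_k$, prove that $G$ is residually finite, use the results of \cite{BSTT} on residually finite groups to place $w(G)$ inside the Hirsch--Plotkin radical, and then invoke the Burnside-type Corollary \ref{residual} to control $G/w(G)$. By Lemma \ref{H^y}$(v)$ there is an integer $k$ such that every $\delta_k$-value of $G$ is a $v$-value, hence every $\delta_k^m$-value is a $w$-value and so $n$-Engel; as the other hypotheses are unchanged, it is enough to prove the proposition with $v$ replaced by $\delta_k$, so from now on $w=\delta_k^m$ and $X$ denotes the normal commutator-closed set of $\delta_k$-values of $G$, the $m$-th power of each of which is $n$-Engel. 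We record two preliminary facts. First, in any finite group the left Engel elements lie in the Fitting subgroup (Baer, \cite[12.3.7]{Rob}); since $G$ is generated by finitely many Engel elements, every finite quotient of $G$ equals its Fitting subgroup and is therefore nilpotent, so all finite quotients of $G$ belong to $\mathcal{N}^1$. Second, iterating Lemma \ref{H^y}$(i)$ shows that every $\gamma_j(G)$ is finitely generated, and the finite-generation arguments behind Lemma \ref{H^y}$(ii)$--$(iii)$, applied to $G^{(k)}=\langle X\rangle$, show that $G^{(k)}$ — and hence the $k$-th derived subgroup of every quotient of $G$ — is finitely generated.

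To prove that $G$ is residually finite, for each $t\geq 1$ let $M_t$ be the intersection of all subgroups of $\gamma_i(G)$ of index at most $t$; by Remark \ref{hall} these are finitely many, so $M_t$ is a characteristic subgroup of finite index in $\gamma_i(G)$, hence normal in $G$, and $\bigcap_t M_t=1$ because $\gamma_i(G)$ is residually finite. Each $G/M_t$ is an extension of the finite group $\gamma_i(G)/M_t$ by the finitely generated nilpotent group $G/\gamma_i(G)$, hence is polycyclic-by-finite and in particular residually finite; since $\bigcap_t M_t=1$, it follows that $G$ is residually finite, and a fortiori locally graded. By \cite[Theorem A]{BSTT} the verbal subgroup $w(G)$ is contained in the Hirsch--Plotkin radical $\mathrm{HP}(G)$; in particular $w(G)$ is locally nilpotent, so by \cite{LMS} the group $\overline G:=G/w(G)$ is again locally graded.

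Now $\overline G$ is locally graded, generated by finitely many Engel elements, and satisfies the law $\delta_k^m\equiv1$; hence $\overline G^{(k)}=\delta_k(\overline G)$ is generated by the normal commutator-closed set of $\delta_k$-values of $\overline G$, each of order dividing $m$, and $\overline G^{(k)}$ is finitely generated as the image of $G^{(k)}$. Every finite quotient of $\overline G$ is nilpotent (as above), and since $\overline G^{(k)}$ is a finitely generated normal subgroup of $\overline G$, a short argument using Remark \ref{hall} — replacing a normal subgroup of finite index in $\overline G^{(k)}$ by the intersection of all subgroups of that index, which is normal in $\overline G$ — shows that every finite quotient of $\overline G^{(k)}$ is a quotient of a finite normal subgroup of a quotient of $\overline G$, hence is nilpotent. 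Thus $\overline G^{(k)}$ modulo its finite residual is residually-$\mathcal{N}^1$, so Corollary \ref{residual} applies to $\overline G^{(k)}$ and shows it to be locally finite. Consequently $\overline G$ is (locally finite)-by-soluble and carries the finite normal series $1\leq\overline G^{(k)}\leq\overline G^{(k-1)}\leq\cdots\leq\overline G$ all of whose factors (locally finite, or abelian) satisfy max locally; by Plotkin's theorem \cite{Pl} the Engel generators of $\overline G$ lie in $\mathrm{HP}(\overline G)$, so $\overline G=\mathrm{HP}(\overline G)$ is finitely generated and locally nilpotent, hence nilpotent. Since $w(G)\leq\mathrm{HP}(G)$ this makes $G/\mathrm{HP}(G)$ nilpotent, so $G$ has the finite normal series $1\leq\mathrm{HP}(G)\leq G$ with locally nilpotent factors; applying Plotkin's theorem once more, the left Engel elements of $G$ lie in $\mathrm{HP}(G)$, and as $G$ is generated by finitely many of these, $G=\mathrm{HP}(G)$ is finitely generated and locally nilpotent, hence nilpotent.

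I expect the decisive difficulty to be the step yielding that $\overline G^{(k)}$ is locally finite. For $m=1$ one has $w=\delta_k$, so $\overline G=G/G^{(k)}$ is already soluble and no Burnside-type input is needed; for $m>1$, however, $\overline G$ only satisfies the law $\delta_k^m\equiv1$, and passing from this law to local finiteness of the verbal subgroup $\overline G^{(k)}$ requires the full strength of Corollary \ref{residual} — and hence that the finite-generation preparations around Lemma \ref{H^y} be arranged so that $\overline G^{(k)}$ is finitely generated and its finite quotients are recognised as nilpotent.
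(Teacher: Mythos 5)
Your overall strategy (prove $G$ residually finite, then use the results of \cite{BSTT}) founders on the finite-generation claims you slip in at the start, and these are precisely the difficulty that the paper's double induction is designed to avoid. You assert that ``iterating Lemma \ref{H^y}$(i)$'' makes every $\gamma_j(G)$ finitely generated, and that the arguments behind Lemma \ref{H^y}$(ii)$--$(iii)$ make $G^{(k)}$ finitely generated. Neither claim is justified. Lemma \ref{H^y}$(ii)$ does give $G'$ finitely generated, because the given generators of $G$ are Engel; but the natural generators of $G'$, $\gamma_3(G)$, \dots are commutators of Engel elements, which are neither Engel nor $\delta_k$-values (for $j<k$), so none of $(i)$--$(iii)$ can be applied again. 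Lemma \ref{H^y}$(iii)$ in particular has as a hypothesis that the group is generated by \emph{finitely many} elements of the commutator-closed set $X$, which for $G^{(k)}=\langle X\rangle$ is exactly what you would need to prove first, so the appeal is circular. Without $\gamma_i(G)$ finitely generated, Remark \ref{hall} does not apply, $M_t$ need not have finite index, and your proof that $G$ is residually finite collapses for $i\geq 3$; the same missing finite generation of $\overline G^{(k)}$ undermines the later step where you show that finite quotients of $\overline G^{(k)}$ are nilpotent before invoking Corollary \ref{residual}. (A smaller point: even granted that step, ``a finite normal subgroup of a quotient of $\overline G$ is nilpotent'' needs an extra centralizer argument, since $\overline G/M$ is not itself a finite quotient of $\overline G$; that part is fixable.)

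The paper handles $i=1,2$ essentially as you would like to (Proposition 14 of \cite{BSTT} for $i=1$; Lemma \ref{H^y}$(ii)$ plus Lemma \ref{rfbf} to get residual finiteness when $i=2$), but for $i>2$ it never tries to show $\gamma_i(G)$ is finitely generated. Instead it inducts on the number $d$ of Engel generators and on $i$: for each generator $x_j$ it forms $H=\langle x_j\rangle^G$, which is finitely generated by Lemma \ref{HK} because the remaining generators generate a nilpotent subgroup by the induction on $d$; the image of $H$ in the nilpotent group $G/\gamma_i(G)$ has class at most $i-2$, so $\gamma_{i-1}(H)\leq\gamma_i(G)$ is residually finite, the induction on $i$ makes $H$ nilpotent, and Fitting's theorem finishes. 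If you want to salvage your approach, you would need an independent argument producing the finite generation you assume (of $\gamma_i(G)$, or of $G^{(k)}$); note also that once residual finiteness of $G$ is in hand, Proposition 14 of \cite{BSTT} concludes immediately, so the long second half of your argument via Theorem A, Corollary \ref{residual} and Plotkin would be unnecessary in any case.
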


\begin{proof}
Let $G=\l x_1,\ldots,x_d\r$ where each $x_j$ is an Engel element. We use induction on $d$ and $i$, respectively. If $G$ is cyclic, the result is obvious so we assume that $d>1$. If $i=1$, the claim follows from Proposition 14 of \cite{BSTT}. Suppose $i=2$, that is, $G'$ is residually finite. Part $(ii)$ of Lemma \ref{H^y} tells us that $G'$ is finitely generated. In view of Lemma \ref{rfbf}, $G$ is residually finite and so we are back to the case $i=1$. Let $i>2$ and $j\in\{1,\ldots,d\}$. Set $H=\l x_j\r^G$. Then $K=\l x_1,\ldots,x_{j-1},x_{j+1},\ldots,x_d\r$ is nilpotent, by induction on $d$, so that $H=\l x_j \r^K$ is finitely generated by Lemma \ref{HK}. Now, $H$ is generated by finitely many Engel elements and its image in $G/\gamma_i(G)$ has nilpotency class at most $i-2$. Therefore $\gamma_{i-1}(H)$ is residually finite and, by induction on $i$, $H$ is nilpotent. We have shown that every generator $x_j$ of $G$ is contained in a normal nilpotent subgroup. Applying Fitting's Theorem (see, for instance, \cite[5.2.8]{Rob}), the proposition follows.
\end{proof}

We are now in a position to prove our main result.

\begin{proof}[\bf Proof of Theorem \ref{main}]
Recall that $v$ is a multilinear commutator word, $w=v^m$ and $G$ is a locally graded group in which all $w$-values are $n$-Engel. We will prove that $w(G)$ is locally nilpotent.

By Lemma \ref{H^y} $(v)$, there exists $k\geq 1$ such that every $\delta_k$-value is a $v$-value. Therefore, by Proposition \ref{v}, the verbal subgroup corresponding to $w_k=\delta_k^m$ is locally nilpotent. Next, we will prove that $w(G/w_k(G))$ is locally nilpotent. We remark that $G/w_k(G)$ is itself locally graded \cite{LMS}. So, without loss of generality, we may assume that all $\delta_k$-commutators in $G$ have finite order dividing $m$. Let $w_1,\dots,w_d$ be $w$-values of $G$ and set $W=\langle w_1,\dots,w_d\rangle$. Since every soluble image of $W$ is nilpotent \cite[12.3.3]{Rob}, there exists $i\geq 1$ such that $\gamma_i(W)\leq W^{(k)}$. Let $R$ be the finite residual of $W^{(k)}$. Then $R\lhd W$ and $\gamma_i(W/R)$ is residually finite. By Proposition \ref{gamma}, it follows that $W/R$ is nilpotent. In particular $W^{(k)}/R$ is nilpotent and generated by elements of finite order, so that $W^{(k)}$ is locally finite by Corollary \ref{residual}. As $W/W^{(k)}$ is nilpotent, according to Plotkin \cite{Pl}, we conclude that $W$ is nilpotent. This proves that $w(G/w_k(G))$ is locally nilpotent. Thus, $w(G)$ has a normal series $1\leq w_k(G)\leq w(G)$ all of whose factors are locally nilpotent. Hence, by Plotkin \cite{Pl}, $w(G)$ is locally nilpotent, as required.
\end{proof}

\vspace {0.5cm}

\noindent{\bf Aknowledgements.} The authors wish to thank the anonymous referee for useful suggestions.

\end{document}